\documentclass[reqno]{amsproc}
\usepackage[utf8]{inputenc}
\usepackage{orcidlink}
\usepackage{amsfonts}
\usepackage{graphicx}
\usepackage{verbatim}
\usepackage{amscd}
\usepackage{amsmath, physics}
\usepackage{enumitem}
\usepackage{amssymb}
\usepackage{latexsym}
\usepackage{hyperref}
\usepackage[all]{xy}
\usepackage{color}
\usepackage{mathrsfs}
\newtheorem{theorem}{\bf Theorem}

\newtheorem{lemma}{\bf Lemma}
\theoremstyle{definition}

\usepackage{empheq,etoolbox}
\usepackage{stmaryrd}
\numberwithin{equation}{section}
\patchcmd{\subequations}
  {\theparentequation\alph{equation}}
  {\theparentequation.\alph{equation}}
  {}{}

\begin{document}

\title[Rigidity of gradient K\"ahler--Ricci solitons]{Rigidity of nonsteady gradient
K\"ahler--Ricci solitons with constant scalar curvature}

\author[M. A. R. M. Hor\'acio]{Matheus Andrade Ribeiro de Moura Hor\'acio}
\address{Departamento de Matem\'atica, Universidade de Bras\'ilia,
70910-900 Bras\'ilia, DF, Brazil}
\email{matheus.andrade5488@gmail.com}

\subjclass[2020]{Primary 53E20; Secondary 53C55, 53C25}
\keywords{Gradient Ricci soliton, K\"ahler geometry, constant scalar
curvature, rigidity}
\date{}

\hypersetup{
  pdftitle={Rigidity of nonsteady gradient K\"ahler--Ricci solitons with constant scalar curvature},
  pdfauthor={Matheus Andrade Ribeiro de Moura Hor\'acio}
}

\maketitle{}

\begin{abstract}
We prove that every complete nonsteady gradient K\"ahler--Ricci soliton
with constant scalar curvature is rigid. This establishes Cao's rigidity
conjecture for K\"ahler--Ricci solitons in arbitrary dimension and gives
the corresponding result for expanding solitons, without additional
curvature assumptions. The proof uses a rigidity criterion for gradient
Ricci solitons with constant scalar curvature, expressed by the vanishing
of the Lie derivative of the Ricci tensor along the soliton vector field.
In the K\"ahler case, this vanishing follows from constant scalar curvature,
the closedness of the Ricci form, and the classical real holomorphicity
of the soliton vector field.
\end{abstract}
\maketitle

\section{Introduction and main results}

A gradient Ricci soliton $(M^n,g,f,\lambda)$ is a Riemannian
manifold $(M^n,g)$ with a smooth function $f$ and a constant $\lambda$
satisfying
\begin{equation}\label{soliton}
\mathrm{Ric}+\nabla^2f=\lambda g,
\end{equation}
where $\mathrm{Ric}$ is the Ricci tensor and $\nabla^2f$ is the Hessian of
$f$. The soliton is shrinking, steady, or expanding according as
$\lambda>0$, $\lambda=0$, or $\lambda<0$, respectively. Throughout this
article, nonsteady means that $\lambda\ne0$.

Following Petersen and Wylie \cite{PW}, a gradient Ricci soliton is
called rigid if it is a flat vector bundle
$N\times_\Gamma\mathbb{R}^k$, where $N$ is Einstein with Einstein
constant $\lambda$ and the discrete group $\Gamma$ acts freely and
properly discontinuously by isometries on $N$ and orthogonally on
$\mathbb{R}^k$. The potential function is, up to an additive constant,
$f=\frac{\lambda}{2}d^2$, where $d$ is the distance in each flat fiber
to the zero section.
In particular, a rigid soliton has constant scalar curvature.
On a compact soliton, the converse follows by tracing and integrating
\eqref{soliton}: constant scalar curvature forces $f$ to be constant,
so the metric is Einstein. The question is therefore whether constant
scalar curvature forces rigidity in the noncompact case.

Cao conjectured that every complete shrinking gradient Ricci soliton
with constant scalar curvature is rigid; see Cheng and Zhou \cite{ChZ}.
Cheng and Zhou proved the four-dimensional case without a K\"ahler
assumption.
More recently, Ou, Qu and Wu \cite{OQW} and Wang and Wu \cite{WW}
obtained further rigidity results for shrinking solitons with constant
scalar curvature, including new proofs of the four-dimensional case and
higher-dimensional results under additional assumptions. In the K\"ahler setting, Fern\'andez-L\'opez and
Garc\'ia-R\'io \cite[Theorem~4]{FG} had established rigidity in real dimensions
$4$ and $6$. Their proof uses the even multiplicities of the Ricci
eigenvalues and a rigidity theorem for solitons with at most three
distinct Ricci eigenvalues.

Rigidity results in higher dimensions have also been obtained under
additional geometric assumptions. Chen and Zhu \cite{CZ} proved that
complete shrinking or expanding gradient K\"ahler--Ricci solitons with
harmonic Bochner tensor are rigid. Zhang \cite[Theorem~1.2]{Zhang}
proved rigidity for complete noncompact gradient K\"ahler--Ricci
shrinkers with constant scalar curvature in complex dimension $m=5$
or $m\ge7$, assuming that the fourth-order divergence of the curvature
tensor or of the Bochner tensor is nonnegative.

We prove that constant scalar curvature suffices in the K\"ahler setting
in every dimension, for both shrinking and expanding solitons.

\begin{theorem}\label{kahler}
Every complete nonsteady gradient K\"ahler--Ricci soliton with constant
scalar curvature is rigid.
\end{theorem}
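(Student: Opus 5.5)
The plan follows the strategy announced in the abstract. First, establish a rigidity criterion for an arbitrary complete gradient Ricci soliton $(M,g,f,\lambda)$ with $\lambda\neq 0$ and constant scalar curvature: if $\mathcal{L}_{\nabla f}\mathrm{Ric}=0$, the soliton is rigid. Second, verify that this Lie derivative vanishes in the K\"ahler setting.

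For the criterion, I would start from the standard identities for gradient solitons:
\[
\nabla R=2\,\mathrm{Ric}(\nabla f),\qquad \Delta_f \mathrm{Ric}=2\lambda\mathrm{Ric}-2\,\mathrm{Rm}*\mathrm{Ric},\qquad \Delta_f R=2\lambda R-2|\mathrm{Ric}|^2 .
\]
Since $R$ is constant, these give $\mathrm{Ric}(\nabla f)=0$ and $|\mathrm{Ric}|^2=\lambda R$. Next, I would expand the Lie derivative as
\[
\mathcal{L}_{\nabla f}\mathrm{Ric}=\nabla_{\nabla f}\mathrm{Ric}+\mathrm{Ric}\circ\nabla^2 f+\nabla^2 f\circ\mathrm{Ric}.
\]
Substituting $\nabla^2 f=\lambda g-\mathrm{Ric}$, the hypothesis becomes $\nabla_{\nabla f}\mathrm{Ric}=2\mathrm{Ric}^2-2\lambda\mathrm{Ric}$. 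Call this condition $(\ast)$.

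Taking the inner product of $(\ast)$ with $\mathrm{Ric}$ and using that $|\mathrm{Ric}|^2$ is constant gives $\operatorname{tr}\mathrm{Ric}^3=\lambda|\mathrm{Ric}|^2$. More usefully, let $\mu$ be an eigenvalue function of $\mathrm{Ric}$. Along integral curves of $\nabla f$, condition $(\ast)$ yields the ODE $\nabla f(\mu)=2\mu(\mu-\lambda)$. The symmetric functions $\operatorname{tr}\mathrm{Ric}^k$ satisfy a closed system of ODEs along the flow. Combining this with the constancy of $R$ and of $|\mathrm{Ric}|^2$, I would argue inductively that every $\operatorname{tr}\mathrm{Ric}^k$ is constant. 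For each $k$, the derivative $2k(\operatorname{tr}\mathrm{Ric}^{k+1}-\lambda\operatorname{tr}\mathrm{Ric}^k)$ must then vanish, which gives $\operatorname{tr}\mathrm{Ric}^{k+1}=\lambda\operatorname{tr}\mathrm{Ric}^k$ for all $k$. This forces every eigenvalue to lie in $\{0,\lambda\}$, with constant multiplicities.

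Constant eigenvalues then make $\mathrm{Ric}$ and $\nabla^2 f$ parallel-like. I would use the Petersen--Wylie characterization \cite{PW}: a soliton is rigid if and only if it has constant scalar curvature and is radially flat, i.e.\ $\mathrm{Rm}(\cdot,\nabla f)\nabla f=0$. Radial flatness should follow from differentiating $\mathrm{Ric}(\nabla f)=0$ and using the Ricci identity
\[
\operatorname{div}\mathrm{Rm}(\cdot,\cdot,\cdot,\nabla f)=\mathrm{Rm}(\nabla f,\cdot,\cdot,\cdot)\text{-type terms}=\nabla\mathrm{Ric},
\]
together with the eigenvalue structure. Completeness and $\lambda\neq0$ enter through the splitting of $M$ along the eigenspace decomposition, via the properness of $f$.

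For the K\"ahler step, recall that on a gradient K\"ahler--Ricci soliton $J\nabla f$ is a Killing field and $\nabla f$ is real holomorphic, so $\mathcal{L}_{\nabla f}J=0$. The Ricci form $\rho$ is closed. By Cartan's formula,
\[
\mathcal{L}_{\nabla f}\rho=d\,\iota_{\nabla f}\rho .
\]
Since $\iota_{\nabla f}\rho=\mathrm{Ric}(J\nabla f,\cdot)$ up to sign, and $\mathrm{Ric}$ commutes with $J$, this equals $\pm J$ applied to $\mathrm{Ric}(\nabla f)$. That vanishes because $\nabla R=2\,\mathrm{Ric}(\nabla f)=0$. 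Hence $\mathcal{L}_{\nabla f}\rho=0$. Because $\mathcal{L}_{\nabla f}J=0$ and $\mathrm{Ric}=\rho(\cdot,J\cdot)$, it follows that $\mathcal{L}_{\nabla f}\mathrm{Ric}=0$, and the criterion applies.

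I expect the main obstacle to be the criterion itself, specifically the step showing that the trace invariants are constant. Without that, the eigenvalue ODE only pins the eigenvalues down along flow lines. If the inductive argument fails, I would fall back on using properness of $f$ and the growth estimates: eigenvalues are bounded because $|\mathrm{Ric}|^2=\lambda R$ is constant. Solutions of $\dot\mu=2\mu(\mu-\lambda)$ that stay bounded along complete flow lines, in both time directions where available, must be the equilibria $0$ or $\lambda$.
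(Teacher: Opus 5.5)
Your proposal is essentially the paper's proof. It uses the same rewriting of $\mathcal L_{\nabla f}\mathrm{Ric}=0$ as $\nabla_{\nabla f}\mathrm{Ric}=2(\mathrm{Ric}^2-\lambda\mathrm{Ric})$, the same recursion $\nabla_{\nabla f}p_k=2k(p_{k+1}-\lambda p_k)$ for $p_k=\mathrm{tr}\,\mathrm{Ric}^k$, and the same appeal to Petersen--Wylie. The K\"ahler step is also the same: Cartan's formula with $d\rho=0$, $\iota_{\nabla f}\rho=0$ and $\mathcal L_{\nabla f}J=0$.

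The step you single out as the main obstacle is immediate. If $p_k$ is constant, the recursion gives $p_{k+1}=\lambda p_k$, which is again constant, so the induction runs from $p_1=R$. To justify that every eigenvalue lies in $\{0,\lambda\}$, state explicitly that $\sum_i R_i^2(R_i-\lambda)^2=p_4-2\lambda p_3+\lambda^2p_2=0$, as the paper does.

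The one real divergence is radial flatness. The paper reads off $0\le\mathrm{Ric}\le\lambda g$ (or $\lambda g\le\mathrm{Ric}\le0$) from the eigenvalues and cites Proposition~1.3(2) of \cite{PW}. Your direct route works and is more self-contained, but the divergence identity you display is not the relevant one. What you need is the commutation formula $\mathrm{Rm}(X,Y)\nabla f=(\nabla_Y\mathrm{Ric})X-(\nabla_X\mathrm{Ric})Y$, which comes from $\nabla_Y\nabla f=\lambda Y-\mathrm{Ric}(Y)$. Take $Y=\nabla f$:
\begin{itemize}
\item the first term vanishes by $(\ast)$ together with $\mathrm{Ric}^2=\lambda\mathrm{Ric}$;
\item differentiating $\mathrm{Ric}(\nabla f)=0$ gives $(\nabla_X\mathrm{Ric})\nabla f=-\mathrm{Ric}(\lambda X-\mathrm{Ric}X)=(\mathrm{Ric}^2-\lambda\mathrm{Ric})X=0$.
\end{itemize}
So $\mathrm{Rm}(X,\nabla f)\nabla f=0$ everywhere, and only Theorem~1.2 of \cite{PW} is needed. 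Completeness and $\lambda\ne0$ are used inside that theorem, so no separate splitting or properness argument is required.

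Finally, discard the fallback, because it is false. For $\lambda>0$, every solution of $\dot\mu=2\mu(\mu-\lambda)$ with $\mu(0)\in(0,\lambda)$ is bounded on all of $\mathbb{R}$ and runs from $\lambda$ to $0$. Likewise, for $\lambda<0$ the solutions starting in $(\lambda,0)$ are bounded. Boundedness alone therefore does not force the equilibria.
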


More explicitly, if $M$ has complex dimension $m$, its universal cover
is holomorphically isometric to
\[
N^k\times\mathbb{C}^{m-k},\qquad 0\le k\le m,
\]
where $N^k$ is a complete simply connected K\"ahler--Einstein manifold
of complex dimension $k$ with Einstein constant $\lambda$; $N^0$ denotes
a point. Thus $M$ is a quotient of this product by a discrete group of
holomorphic isometries acting freely and properly discontinuously.

The shrinking case proves Cao's conjecture for K\"ahler--Ricci solitons.
The theorem removes the dimension restriction from the nonsteady case
of Fern\'andez-L\'opez and Garc\'ia-R\'io's result without additional
curvature assumptions.

The proof rests on a criterion that applies without a K\"ahler assumption.
Petersen and Wylie \cite[Theorem~1.2]{PW} characterized rigidity of
complete nonsteady gradient Ricci solitons by constant scalar curvature
and radial flatness, namely the vanishing of sectional curvature on
planes containing $\nabla f$ at regular points of $f$. Under constant
scalar curvature, they also obtained radial flatness from
$0\le\mathrm{Ric}\le\lambda g$ for
shrinkers and $\lambda g\le\mathrm{Ric}\le0$ for expanders
\cite[Proposition~1.3(2)]{PW}. We obtain these inequalities from a
condition on the Lie derivative of the Ricci tensor.

\begin{theorem}\label{crit}
Let $(M^n,g,f,\lambda)$ be a complete nonsteady gradient Ricci soliton
with constant scalar curvature. Then $M$ is rigid if and only if
\begin{equation}\label{stationary}
\mathcal L_{\nabla f}\mathrm{Ric}=0,
\end{equation}
where $\mathrm{Ric}$ denotes the covariant $(0,2)$-tensor field.
\end{theorem}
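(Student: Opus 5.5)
We prove the two directions separately. The substantive one is ``$\Leftarrow$''. There we reduce \eqref{stationary} to a Riccati-type ODE for the Ricci eigenvalues along the flow of $\nabla f$. We then show that boundedness of $|\mathrm{Ric}|$, which holds under constant scalar curvature, forces the pinching $0\le\mathrm{Ric}\le\lambda g$ (shrinkers) or $\lambda g\le\mathrm{Ric}\le0$ (expanders). Finally we invoke Petersen--Wylie \cite[Proposition~1.3(2)]{PW} to get radial flatness and \cite[Theorem~1.2]{PW} to get rigidity.

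\emph{Rigid $\Rightarrow$ \eqref{stationary}.} Work on the universal cover $N\times\mathbb{R}^k$, since the statement is local and $\Gamma$-invariant. There $\mathrm{Ric}=\lambda\,g_N\oplus 0$ and $f=\frac{\lambda}{2}|x|^2$, so $\nabla f=\lambda\,x\cdot\partial_x$ is tangent to the flat factor. Its flow is $(p,x)\mapsto(p,e^{\lambda t}x)$. This flow fixes the $N$-factor, hence preserves the pullback of $g_N$, so $\mathcal L_{\nabla f}\mathrm{Ric}=0$.

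\emph{\eqref{stationary} $\Rightarrow$ rigid.} The argument has four steps.

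First, writing $\mathrm{Ric}$ also as a symmetric endomorphism, we have
\[
\mathcal L_{\nabla f}\mathrm{Ric}=\nabla_{\nabla f}\mathrm{Ric}+2\,\mathrm{Ric}\circ\nabla^2 f=\nabla_{\nabla f}\mathrm{Ric}+2\lambda\,\mathrm{Ric}-2\,\mathrm{Ric}^2 .
\]
So \eqref{stationary} is equivalent to $\nabla_{\nabla f}\mathrm{Ric}=2\,\mathrm{Ric}(\mathrm{Ric}-\lambda)$.

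Second, we use the standard soliton identity $\Delta_f R=2\lambda R-2|\mathrm{Ric}|^2$. Constant $R$ then gives $|\mathrm{Ric}|^2=\lambda R$, so all Ricci eigenvalues are uniformly bounded on $M$.

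Third, we use that $\nabla f$ is a complete vector field on a complete gradient soliton (Zhang's result), so every integral curve $\gamma$ is defined on all of $\mathbb{R}$. Along $\gamma$, take a parallel orthonormal frame. In it, $\mathrm{Ric}$ becomes a matrix $A(t)$ satisfying $A'=2A(A-\lambda)$. The right-hand side is a polynomial in $A$, so $A(t)$ commutes with $A(0)$ and is diagonalized in a fixed eigenbasis of $A(0)$. Hence each eigenvalue obeys $\mu'=2\mu(\mu-\lambda)$ exactly. This is the step I expect to need care: one must verify that the solution of the matrix ODE really is the spectral one. Uniqueness for the ODE gives this, since the spectral solution solves it.

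Fourth, the scalar ODE $\mu'=2\mu(\mu-\lambda)$ has fixed points $0$ and $\lambda$. Every solution starting outside the closed interval between them blows up in finite forward or backward time. This contradicts completeness of $\gamma$ together with the eigenvalue bound from the second step. Therefore every eigenvalue lies in $[0,\lambda]$ when $\lambda>0$, and in $[\lambda,0]$ when $\lambda<0$. This holds at every point, since every point lies on some integral curve, including zeros of $\nabla f$ where $\gamma$ is constant and $\mu$ must be a fixed point.

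These are exactly the pinching hypotheses of \cite[Proposition~1.3(2)]{PW}. Together with constant scalar curvature, that proposition yields radial flatness, and \cite[Theorem~1.2]{PW} then gives rigidity.
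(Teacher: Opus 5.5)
Your proof is correct. Both arguments share the transport identity $\nabla_{\nabla f}\mathrm{Ric}=2(\mathrm{Ric}^2-\lambda\mathrm{Ric})$ and the final appeal to Petersen--Wylie, but your ``$\Leftarrow$'' direction reaches the pinching by a different route.

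The paper never integrates along the flow. It traces the transport identity to get $\nabla_{\nabla f}p_j=2j(p_{j+1}-\lambda p_j)$ for $p_j=\mathrm{tr}\,\mathrm{Ric}^j$. Since $p_2=\lambda R$ is constant, this gives $p_3=\lambda p_2$ and then $p_4=\lambda p_3$ pointwise, so $|\mathrm{Ric}^2-\lambda\mathrm{Ric}|^2=p_4-2\lambda p_3+\lambda^2p_2=0$. That argument is purely pointwise and algebraic. It needs neither Zhang's theorem that $\nabla f$ is complete nor uniqueness for the matrix Riccati equation. It also gives the sharper conclusion that the eigenvalues lie in $\{0,\lambda\}$, and it produces the trace identities $p_3=\lambda^2R$, $p_4=\lambda^3R$ relevant to \cite[Remark~17]{FG}.

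Your dynamical argument costs the extra global input of completeness of the flow. In return it gives a clear mechanism: eigenvalues outside $[0,\lambda]$ would blow up in finite time along a complete integral curve. It also gives a slightly more general statement. Your third and fourth steps use only the transport identity and completeness of $\nabla f$, so the pinching follows from $\mathcal{L}_{\nabla f}\mathrm{Ric}=0$ on any complete gradient soliton. Constant scalar curvature then enters only through Petersen--Wylie.

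Two small remarks. First, the eigenvalue bound from your second step is redundant. Once $\gamma$ is defined on all of $\mathbb{R}$, the matrix $A(t)$ is continuous there and cannot blow up. Second, you can recover the paper's sharper conclusion immediately. Your pinching makes each term of $\sum_i\mu_i(\mu_i-\lambda)=|\mathrm{Ric}|^2-\lambda R=0$ nonpositive, hence each term is zero.
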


Fern\'andez-L\'opez and Garc\'ia-R\'io \cite[Remark~17]{FG} showed
that rigidity is equivalent to constancy of the traces of the first four
powers of the Ricci operator. They also observed that constant scalar
curvature $R$ makes the second trace constant, whereas the third and fourth were
``not yet completely understood.'' In the proof of Theorem~\ref{crit},
equation~\eqref{stationary} supplies the relations between these traces
that give $\mathrm{Ric}^2=\lambda\mathrm{Ric}$, and the results of
Petersen and Wylie complete the argument.

For Theorem~\ref{kahler}, constant scalar curvature gives
$\mathrm{Ric}(\nabla f)=0$. The Ricci form $\rho$ is closed and its contraction
with $\nabla f$ vanishes, so Cartan's formula gives
$\mathcal L_{\nabla f}\rho=0$. The classical real holomorphicity of
$\nabla f$ then gives \eqref{stationary}. Equations~\eqref{transport}
and~\eqref{pj} therefore yield
$\mathrm{tr}\,\mathrm{Ric}^3=\lambda^2R$ and
$\mathrm{tr}\,\mathrm{Ric}^4=\lambda^3R$ in the K\"ahler case.

\section{A rigidity criterion}

We denote the scalar curvature by $R$ and use $\mathrm{Ric}$ also for
the self-adjoint endomorphism defined by the Ricci tensor and the metric.
Powers and traces of $\mathrm{Ric}$ refer to this endomorphism, whereas
Lie derivatives of $\mathrm{Ric}$ refer to the covariant tensor.
The soliton equation can then be written as
\begin{equation}\label{operatorsoliton}
\mathrm{Ric}+\nabla(\nabla f)=\lambda I,
\end{equation}
where $I$ is the identity endomorphism. We first recall the identities
that will be used in the proof.

\begin{lemma}\label{identities}
Let $(M^n,g,f,\lambda)$ be a gradient Ricci soliton with constant scalar
curvature $R$. Then
\begin{equation}\label{trRic}
\mathrm{Ric}(\nabla f)=0,\qquad \mathrm{tr}\,\mathrm{Ric}=R,\qquad
\mathrm{tr}\,\mathrm{Ric}^2=\lambda R.
\end{equation}
\end{lemma}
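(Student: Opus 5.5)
The plan is to derive all three identities from the two standard first-order consequences of the soliton equation \eqref{soliton}. The first is the contracted second Bianchi identity combined with the divergence of \eqref{soliton}:
\[
\tfrac12\nabla R=\mathrm{Ric}(\nabla f).
\]
The second is the Laplacian formula for the scalar curvature along a soliton:
\[
\Delta R-\langle\nabla f,\nabla R\rangle=2\lambda R-2|\mathrm{Ric}|^2 .
\]
The middle identity $\mathrm{tr}\,\mathrm{Ric}=R$ is simply the definition of scalar curvature and needs no argument.

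For $\mathrm{Ric}(\nabla f)=0$, I take the divergence of \eqref{soliton}. On one side, $\mathrm{div}\,\mathrm{Ric}=\tfrac12\nabla R$. On the other, for the Hessian term we have $\mathrm{div}\,\nabla^2 f=\nabla\Delta f+\mathrm{Ric}(\nabla f)$, by commuting covariant derivatives (the Bochner-type identity). Tracing \eqref{soliton} gives $\Delta f=n\lambda-R$, so $\nabla\Delta f=-\nabla R$. Substituting, I obtain $\tfrac12\nabla R-\nabla R+\mathrm{Ric}(\nabla f)=0$, that is, $\mathrm{Ric}(\nabla f)=\tfrac12\nabla R$. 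Since $R$ is constant, $\mathrm{Ric}(\nabla f)=0$.

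For $\mathrm{tr}\,\mathrm{Ric}^2=\lambda R$, I take the divergence of $\nabla R=2\mathrm{Ric}(\nabla f)$ and expand it as
\[
\Delta R=2(\mathrm{div}\,\mathrm{Ric})(\nabla f)+2\langle\mathrm{Ric},\nabla^2 f\rangle=\langle\nabla R,\nabla f\rangle+2\langle\mathrm{Ric},\lambda g-\mathrm{Ric}\rangle .
\]
This is exactly the Laplacian formula above. When $R$ is constant the left side and the gradient term vanish, leaving $0=2\lambda R-2|\mathrm{Ric}|^2$. Since $|\mathrm{Ric}|^2=\mathrm{tr}\,\mathrm{Ric}^2$ for the self-adjoint endomorphism, this gives the claim.

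A shortcut is available here: differentiate $\mathrm{Ric}(\nabla f)=0$ directly and contract. That yields $(\mathrm{div}\,\mathrm{Ric})(\nabla f)+\langle \mathrm{Ric},\nabla^2f\rangle=0$. The first term is $\tfrac12\langle\nabla R,\nabla f\rangle=0$, and the second is $\lambda R-|\mathrm{Ric}|^2$.

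There is no real obstacle; the only delicate point is getting signs and conventions right in the commutation formula $\mathrm{div}\,\nabla^2 f=\nabla\Delta f+\mathrm{Ric}(\nabla f)$. Everything is local, so neither completeness nor $\lambda\ne0$ is needed.
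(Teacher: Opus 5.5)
Your proposal is correct and takes the same route as the paper. The paper cites the two standard identities $\nabla R=2\mathrm{Ric}(\nabla f)$ and $\frac12\Delta_f R=\lambda R-\mathrm{tr}\,\mathrm{Ric}^2$ from Petersen--Wylie, whereas you derive them yourself from the contracted Bianchi identity and the commutation formula $\mathrm{div}\,\nabla^2 f=\nabla\Delta f+\mathrm{Ric}(\nabla f)$, with signs and conventions handled correctly.
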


\begin{proof}
The standard soliton identities \cite[Lemma~2.5]{PW} give
\begin{equation}\label{scalaridentities}
\nabla R=2\mathrm{Ric}(\nabla f),\qquad
\frac12\Delta_fR=\lambda R-\mathrm{tr}\,\mathrm{Ric}^2,
\end{equation}
where $\Delta_f=\Delta-\langle\nabla f,\nabla\cdot\rangle$.
Since $R$ is constant, the first and third assertions follow. The second
is the definition of scalar curvature.
\end{proof}

\begin{proof}[Proof of Theorem~\ref{crit}]
Assume first that \eqref{stationary} holds. For any vector fields $X$
and $Y$, the Lie derivative of the Ricci tensor satisfies
\[
\begin{aligned}
(\mathcal L_{\nabla f}\mathrm{Ric})(X,Y)
&=(\nabla_{\nabla f}\mathrm{Ric})(X,Y)\\
&\quad+\mathrm{Ric}(\nabla_X\nabla f,Y)
  +\mathrm{Ric}(X,\nabla_Y\nabla f).
\end{aligned}
\]
By \eqref{operatorsoliton}, we have
$\nabla_X\nabla f=(\lambda I-\mathrm{Ric})X$. Self-adjointness of
$\mathrm{Ric}$ therefore gives
\[
\begin{aligned}
\mathrm{Ric}(\nabla_X\nabla f,Y)
&=\mathrm{Ric}(X,\nabla_Y\nabla f)\\
&=\langle(\lambda\mathrm{Ric}-\mathrm{Ric}^2)X,Y\rangle.
\end{aligned}
\]
Since the connection is metric compatible, condition~\eqref{stationary}
is equivalent to the following identity for the Ricci endomorphism:
\begin{equation}\label{transport}
\nabla_{\nabla f}\mathrm{Ric}=2(\mathrm{Ric}^2-\lambda\mathrm{Ric}).
\end{equation}

To determine the Ricci eigenvalues, set
\[
p_j=\mathrm{tr}\,\mathrm{Ric}^j=\sum_{i=1}^n R_i^j,
\qquad j\ge1,
\]
where $R_1,\ldots,R_n$ are the eigenvalues at a point, counted with
multiplicity. Differentiating the trace and using its invariance under
cyclic permutations, we obtain
\[
\begin{aligned}
\nabla_{\nabla f}p_j
&=\sum_{a=0}^{j-1}\mathrm{tr}\bigl(
\mathrm{Ric}^a(\nabla_{\nabla f}\mathrm{Ric})
\mathrm{Ric}^{j-1-a}\bigr)\\
&=j\,\mathrm{tr}\bigl(
\mathrm{Ric}^{j-1}\nabla_{\nabla f}\mathrm{Ric}\bigr).
\end{aligned}
\]
Substituting \eqref{transport} yields
\begin{equation}\label{pj}
\nabla_{\nabla f}p_j=2j(p_{j+1}-\lambda p_j).
\end{equation}
By Lemma~\ref{identities}, $p_2=\lambda R$ is constant. Taking $j=2$
in \eqref{pj}, we obtain $p_3=\lambda p_2$. Hence $p_3$ is also
constant, and the same equation with $j=3$ gives $p_4=\lambda p_3$.
Thus $p_3=\lambda^2R$ and $p_4=\lambda^3R$, determining the higher
traces discussed in \cite[Remark~17]{FG}.

Fern\'andez-L\'opez and Garc\'ia-R\'io considered the expression
$\displaystyle\sum_i R_i^2(\lambda-R_i)^2$ in their characterization of rigidity
by constant traces of the first four powers of the Ricci operator.
Here the relations above give
\[
\begin{aligned}
|\mathrm{Ric}^2-\lambda\mathrm{Ric}|^2
&=\sum_{i=1}^n R_i^2(\lambda-R_i)^2\\
&=p_4-2\lambda p_3+\lambda^2p_2=0.
\end{aligned}
\]
It follows that $\mathrm{Ric}^2=\lambda\mathrm{Ric}$, and thus every
Ricci eigenvalue belongs to $\{0,\lambda\}$. Consequently,
\[
\begin{cases}
0\le\mathrm{Ric}\le\lambda g,&\lambda>0,\\
\lambda g\le\mathrm{Ric}\le0,&\lambda<0.
\end{cases}
\]
Since $M$ is complete and $R$ is constant, Theorem~1.2 and
Proposition~1.3(2) of Petersen and Wylie \cite{PW} imply that $M$ is
rigid.

Conversely, suppose that $M$ is rigid. On the product cover
$N\times\mathbb{R}^k$, the Ricci tensor is parallel and its
endomorphism is $\lambda I$ on $TN$ and zero on the Euclidean factor.
Hence $\nabla_{\nabla f}\mathrm{Ric}=0$ and
$\mathrm{Ric}^2=\lambda\mathrm{Ric}$. The Lie derivative formula
above gives \eqref{stationary} on the cover and therefore on $M$.
\end{proof}

\section{The K\"ahler case}

We now verify condition~\eqref{stationary} for a gradient
K\"ahler--Ricci soliton with constant scalar curvature.

\begin{proof}[Proof of Theorem~\ref{kahler}]
Let $J$ be the complex structure. The Ricci endomorphism commutes with
$J$. Since $\nabla J=0$, equation~\eqref{operatorsoliton} gives, for every
vector field $Y$,
\[
\begin{aligned}
(\mathcal L_{\nabla f}J)Y
&=J(\nabla_Y\nabla f)-\nabla_{JY}\nabla f\\
&=J(\lambda I-\mathrm{Ric})Y-(\lambda I-\mathrm{Ric})JY=0.
\end{aligned}
\]
Thus $\nabla f$ is real holomorphic.

Consider the Ricci form $\rho$, with the convention
\[
\rho(X,Y)=\mathrm{Ric}(JX,Y).
\]
Lemma~\ref{identities} gives $\mathrm{Ric}(\nabla f)=0$. Hence
\[
(\iota_{\nabla f}\rho)(Y)
=\mathrm{Ric}(J\nabla f,Y)
=\langle J\mathrm{Ric}(\nabla f),Y\rangle=0.
\]
The Ricci form is closed. Cartan's formula then gives
\[
\mathcal L_{\nabla f}\rho
=\mathrm{d}(\iota_{\nabla f}\rho)
 +\iota_{\nabla f}(\mathrm{d}\rho)=0.
\]
Taking the Lie derivative of the identity defining $\rho$, we obtain
\[
\begin{aligned}
(\mathcal L_{\nabla f}\rho)(X,Y)
&=(\mathcal L_{\nabla f}\mathrm{Ric})(JX,Y)
 +\mathrm{Ric}((\mathcal L_{\nabla f}J)X,Y)\\
&=(\mathcal L_{\nabla f}\mathrm{Ric})(JX,Y).
\end{aligned}
\]
The left-hand side vanishes and $J$ is invertible. Therefore
$\mathcal L_{\nabla f}\mathrm{Ric}=0$, and Theorem~\ref{crit}
implies that $M$ is rigid.
On the universal cover, the Einstein and Euclidean factors are tangent
to the $\lambda$- and $0$-eigenspaces of the Ricci endomorphism.
These are $J$-invariant, so the parallel complex structure splits with
the metric, giving the stated holomorphic product.
\end{proof}

\section*{Acknowledgments}

This study was financed by the Coordena\c{c}\~ao de Aperfei\c{c}oamento
de Pessoal de N\'ivel Superior -- Brasil (CAPES) -- Finance Code 001.

The author thanks the Departamento de Matem\'atica of the Universidade de
Bras\'ilia, where this work was carried out. He is particularly grateful
to his doctoral advisor, Jo\~ao Paulo dos Santos, for his valuable guidance,
patience, and support throughout his doctoral work. He also thanks Valter
Borges Sampaio Junior for his generosity with his time and for the many
mathematical discussions that have contributed to the author's mathematical
development and led to fruitful collaborations on related projects.
Their discussions of K\"ahler geometry have also broadened his understanding
of the subject.


\begin{thebibliography}{9}

\bibitem{CZ}
Q. Chen and M. Zhu,
\textit{On rigidity of gradient K\"ahler-Ricci solitons with harmonic
Bochner tensor},
Proc. Amer. Math. Soc. \textbf{140} (2012), no.~11, 4017--4025.

\bibitem{ChZ}
X. Cheng and D. Zhou,
\textit{Rigidity of four-dimensional gradient shrinking Ricci solitons},
J. reine angew. Math. \textbf{802} (2023), 255--274.

\bibitem{FG}
M. Fern\'andez-L\'opez and E. Garc\'ia-R\'io,
\textit{On gradient Ricci solitons with constant scalar curvature},
Proc. Amer. Math. Soc. \textbf{144} (2016), no.~1, 369--378.

\bibitem{OQW}
J. Ou, Y. Qu and G. Wu,
\textit{Some rigidity results on shrinking gradient Ricci soliton},
J. Geom. Anal. \textbf{35} (2025), no.~10, Paper No.~311, 14~pp.

\bibitem{PW}
P. Petersen and W. Wylie,
\textit{Rigidity of gradient Ricci solitons},
Pacific J. Math. \textbf{241} (2009), no.~2, 329--345.

\bibitem{WW}
C. Wang and G. Wu,
\textit{A note on rigidity of shrinking gradient Ricci solitons with
constant scalar curvature},
Results Math. \textbf{81} (2026), Paper No.~151.

\bibitem{Zhang}
L. Zhang,
\textit{On gradient shrinking and expanding K\"ahler--Ricci solitons},
Mediterr. J. Math. \textbf{19} (2022), Paper No.~15, 12~pp.

\end{thebibliography}
\end{document}